\newtheorem{theorem}{Theorem}
\newtheorem{proposition}[theorem]{Proposition}
\newtheorem{lemma}[theorem]{Lemma}
\newtheorem{remark}[theorem]{Remark}
\newtheorem{example}[theorem]{Example}
\newcommand{\E}{{\mathbb E}}
\newcommand{\R}{{\mathbb R}}
\renewcommand{\P}{{\mathbb P}}
\newcommand{\Acal}{{\cal A}}
\newcounter{rcnt}[section]
\renewcommand{\thercnt}{(\roman{rcnt})}
\begin{document}

\title{Concentration of the spectral measure of large Wishart matrices
	with dependent entries}

\author{Adityanand Guntuboyina and Hannes Leeb\\	
(Yale University)}

\date{October 2008}

\maketitle

\begin{abstract}
We derive concentration inequalities for the spectral measure of
large random matrices, allowing for certain forms of dependence.
Our main focus is on empirical covariance (Wishart) matrices, but
general symmetric random matrices are also considered.
\end{abstract}

\section{Introduction}

In this short paper, we study concentration of the spectral measure of
large random matrices whose elements need not be independent.
In particular, we derive a concentration inequality for Wishart matrices
of the form $X'X/m$ in the important setting where the rows of the $m \times n$ matrix $X$ are
independent but the elements within each row may depend on each other; see
Theorem~\ref{t1}.
We also obtain similar results for
other random matrices with dependent entries; see
Theorem~\ref{t2}, Theorem~\ref{t3}, and the attending examples, which include 
a random graph with dependent edges, and vector time series.

Large random matrices have been the focus of intense research in recent
years; see Bai~\citep{Bai99a} and Guionnet~\citep{Gui04a} for surveys. 
While most of this literature deals with
the case where the underlying matrix has independent entries, 
comparatively little is known for dependent cases.
G\"otze and Tikhomirov~\citep{Got04a}  show that the 
expected 
spectral distribution of an empirical covariance matrix  $X'X/m$  
converges to the Mar\v{c}enko-Pastur law under
conditions that allow for some form of dependence among the
entries of $X$.
Bai and Zhou~\citep{Bai08a} analyzed the limiting spectral distribution
of $X'X/m$ when the row-vectors of $X$
are independent (allowing for certain forms of 
dependence within the row-vectors of $X$).
Mendelson and Pajor~\citep{Men06a} considered
$X'X/m$ in the case where the row-vectors of $X$ are 
independent and identically distributed (i.i.d.); under some additional
assumptions, they derive a concentration result for the operator norm
of $X'X/m - E(X'X/m)$. 
Boutet de Monvel and Khorunzhy~\citep{Bou98a} studied the
limiting behavior of the spectral distribution and of the operator
norm of symmetric Gaussian matrices with dependent entries.

For large random matrices similar to those considered here,
concentration of the spectral measure is also studied
by Guionnet and Zeitouni~\citep{Gui00a}, who consider Wishart
matrices $X'X/m$ where
the entries $X_{i,j}$ of $X$ are independent, as well as
Hermitian matrices with independent entries on and above the diagonal,
and by 
Houdre and Xu~\citep{Hou08a}, who obtained
concentration results for random matrices with stable entries,
thus allowing for certain forms of dependence.
For matrices with dependent entries,
we find that concentration of the spectral measure can be less pronounced
than in the independent case.
Technically, our results rely on 
a slight extension of a result of Talagrand~\citep{Tal96b}, and
on McDiarmid's bounded difference inequality~\citep{Mcd89a}.

\section{Results}

Throughout, the eigenvalues of a symmetric $n\times n$ matrix $M$
are denoted by $\lambda_1(M) \leq \dots \leq \lambda_n(M)$, and we
write $F_M(\lambda)$ for the cumulative distribution function (c.d.f.) of the
spectral distribution of $M$, i.e., 
$F_M(\lambda) = n^{-1} \sum_{i=1}^n \{ \lambda_i(M) \leq \lambda\}$, 
$\lambda \in \R$. The integral of a function $f(\cdot)$ with respect to the 
measure induced by $F_M$ is denoted by $F_M(f)$, i.e., 
$$
F_M(f) \quad=\quad  \frac{1}{n} \sum_{i=1}^n f(\lambda_i(M)).
$$
For certain classes of random matrices $M$
and certain classes of functions $f$, we will show that $F_M(f)$ 
is concentrated around its expectation 
$\E F_M(f)$ or around any median
$\text{med } F_M(f)$. For a Lipschitz function $g$, we write $||g||_L$ for its Lipschitz constant. Moreover, we also consider functions $f:(a, b)\to\R$ that are of bounded variation on $(a, b)$ (where $-\infty \leq a < b \leq \infty$), in the sense that 
\begin{equation}\nonumber
V_f(a, b) \qquad = \qquad 
\sup_{n \geq 1} \;\; \sup_{a < x_0 \leq x_1 \leq \dots \leq x_n < b} \;\;\sum_{k = 1}^{n} 
|f(x_k) - f(x_{k-1})| \
\end{equation}
is finite; cf. Section X.1 in~\citep{Lan93a}. [A function $f$ is of bounded variation on $(a, b)$ if and only if it can be written as the difference of two bounded monotone functions on $(a, b)$, as is easy to see. Note that the indicator function  $g: x \mapsto \{ x\leq \lambda\}$ is of bounded variation on $\R$ with $V_g(\R) = 1$ for each $\lambda \in \R$.]

The following result establishes concentration of $F_S(f)$ for 
Wishart matrices $S$ of the form  $S = X'X/m$
where we only require that the rows of $X$ are independent 
(while allowing for dependence {\em within} each row of $X$).
See also Example~\ref{ma} and Example~\ref{gzgen}, which follow,
for scenarios that also allow for some dependence among the rows of $X$.

\begin{theorem}\label{t1}
Let $X$ be an $m\times n$ matrix whose row-vectors are independent,
set $S=X'X/m$, and fix $f:\R \to\R$.
\begin{list}{\thercnt}{\usecounter{rcnt}\setlength{\leftmargin}{5pt}}
\item \label{t1.i}
Suppose that $f$ is such that the mapping $x\mapsto f(x^2)$ is convex
and Lipschitz,
and suppose that $|X_{i,j}|\leq 1$ for each $i$ and $j$.
For each $\epsilon>0$, we then have
\begin{equation}\label{t1.i.1}
\P \left( \left| F_S(f) -  \text{med } F_S(f) \right|  \geq \epsilon \right)
\quad\leq\quad 
4 \exp\left[
	-\frac{ n m }{ n+m} \,\frac{ \epsilon^2}{8 ||f(\cdot^2)||_L^2}
\right].
\end{equation}
[From the upper bound \eqref{t1.i.1} one can also obtain a similar bound
for $\P(|F_S(f) - \E F_S(f)| \geq \epsilon)$ using standard methods.]
\item \label{t1.ii}
Suppose that $f$ is of bounded variation on $\R$. For each $\epsilon>0$, we then have
\begin{equation}\label{t1.ii.1}
\P \left( \left| F_S(f) - \E F_S(f) \right| \geq \epsilon \right)
\quad\leq\quad 2 \exp \left[ -\frac{n^2}{m} \frac{ 2\epsilon^2}{V_f^2(\R)}
\right].
\end{equation}
In particular, for each $\lambda\in\R$ and each $\epsilon>0$,
the probability $\P(|F_S(\lambda) - \E F_S(\lambda)| \geq \epsilon)$ is bounded by
the right-hand side of \eqref{t1.ii.1} with  $V_f(\R)$ replaced by $1$.
\end{list}
\end{theorem}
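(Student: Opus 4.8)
\emph{Plan.} I would handle the two parts by different tools: Talagrand's convex concentration inequality for \ref{t1.i} and McDiarmid's bounded-difference inequality for \ref{t1.ii}. In both cases the only probabilistic input is the independence of the $m$ rows of $X$; everything else is a deterministic perturbation estimate for the spectrum.

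\emph{Part \ref{t1.i}.} Write $h(x) = f(x^2/m)$. Then $h$ is convex, being the composition of the linear map $x \mapsto x/\sqrt m$ with the convex map $x \mapsto f(x^2)$, it is automatically even (it has the form $f(\cdot^{\,2})$), and $\|h\|_L = \|f(\cdot^2)\|_L/\sqrt m$. The eigenvalues of $S = X'X/m$ are $\sigma_i(X)^2/m$ (the squared singular values of $X$ divided by $m$), together with structural zeros if $m < n$, so $F_S(f)$ differs by an additive constant from $(2n)^{-1}\operatorname{Tr} h(\tilde X)$, where $\tilde X$ is the symmetric $(m+n)\times(m+n)$ block matrix with $X$ in the upper-right block and $X'$ in the lower-left block; indeed the non-zero eigenvalues of $\tilde X$ are $\pm\sigma_i(X)$. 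Since $h$ is convex, $M \mapsto \operatorname{Tr} h(M)$ is a convex function on symmetric matrices, and $\tilde X$ depends linearly on $X$, so $X \mapsto F_S(f)$ is convex on $\R^{m\times n}$. A short computation using the Lidskii--Mirsky inequality $\sum_i |\sigma_i(X) - \sigma_i(Y)|^2 \le \|X-Y\|_F^2$ shows that $X \mapsto F_S(f)$ is also Lipschitz in the Frobenius norm, with a constant proportional to $\|f(\cdot^2)\|_L$ and decaying in $n$ and $m$. Finally, because $|X_{i,j}| \le 1$, the rows of $X$ are independent and take values in the bounded convex set $[-1,1]^n$, so the (slightly extended, to factors valued in a bounded convex subset of $\R^n$) Talagrand inequality for convex Lipschitz functions yields sub-Gaussian concentration of $F_S(f)$ about its median; collecting constants gives \eqref{t1.i.1}.

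\emph{Part \ref{t1.ii}.} The deterministic fact I would use is that replacing one row of $X$ is a perturbation of rank at most one, hence the singular values of $X$ and of the perturbed matrix interlace; consequently, for every $\lambda$ the number of eigenvalues of $S$ that are $\le \lambda$ changes by at most one, i.e. $F_S$ moves by at most $1/n$ in the supremum norm. Next, integration by parts gives, for any two probability c.d.f.s $F, G$ and any $f$ of bounded variation on $\R$, the bound $|F(f) - G(f)| \le V_f(\R)\,\|F-G\|_\infty$; hence changing one row of $X$ changes $F_S(f)$ by at most $V_f(\R)/n$. Since the $m$ rows of $X$ are independent, McDiarmid's inequality with these increments gives $\P(|F_S(f) - \E F_S(f)| \ge \epsilon) \le 2\exp(-2\epsilon^2/(m (V_f(\R)/n)^2))$, which is \eqref{t1.ii.1}. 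The claim about $F_S(\lambda)$ follows by taking $f = g$ with $g(x) = \{x \le \lambda\}$, since then $V_g(\R) = 1$ and $F_S(g) = F_S(\lambda)$.

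\emph{Main obstacle.} The conceptual crux is in part \ref{t1.i}: establishing that the spectral functional $X \mapsto F_S(f)$ is convex in the matrix entries (via the dilation $\tilde X$ and the convexity of $M \mapsto \operatorname{Tr} h(M)$ for convex $h$, exploiting that $f(\cdot^2)$ is even so no monotonicity hypothesis is needed) and carefully tracking the Lipschitz constant through the bordered-matrix reduction and the division by $m$. Once convexity and the Lipschitz bound are in place, the probabilistic step is the standard convex-Lipschitz Talagrand bound, extended in a routine way from one-dimensional factors to factors valued in a bounded convex subset of $\R^n$. Part \ref{t1.ii} is more elementary, its only non-trivial inputs being the rank-one interlacing estimate and the bounded-variation/c.d.f.\ duality, both classical.
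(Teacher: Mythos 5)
Your proposal is correct and follows essentially the paper's own route: for part~\ref{t1.i} you use the same symmetrized dilation of $X$, the same convexity and Lipschitz continuity of $A\mapsto\operatorname{Tr}h(A)$ for convex $h$ (the Guionnet--Zeitouni/Lieb--Pedersen trace lemma), and the same extension of Talagrand's inequality to independent $[-1,1]^n$-valued rows, and the constants do collect to \eqref{t1.i.1} (your Mirsky-based Lipschitz bound $\|f(\cdot^2)\|_L\sqrt{\min(m,n)}/(n\sqrt{m})$ is in fact slightly sharper than the $\|f(\cdot^2)\|_L\sqrt{(n+m)/2}/(n\sqrt{m})$ the paper effectively uses). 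The only deviation is in part~\ref{t1.ii}, where you apply McDiarmid directly to $F_S(f)$ via the rank-one inequality $\|F_{X'X}-F_{Y'Y}\|_\infty\le\operatorname{rank}(X-Y)/n$ and the bounded-variation duality, whereas the paper routes this through the bordered matrix and Theorem~\ref{t3} with $r=2$; both yield exactly \eqref{t1.ii.1}, your direct route being marginally simpler.
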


The upper bounds in Theorem~\ref{t1} are of the form
\begin{equation}
\label{bnd}
\P( |F_S(f) - A | \geq \epsilon) \quad\leq \quad B \exp\left[ -n C\right],
\end{equation}
where $A$, $B$, and $C$ equal $\text{med }F_S(f)$, $4$,
and $m \epsilon^2 / \left( (n+m) 8 ||f(\cdot^2)||_L^2 \right)$ in part~\ref{t1.i}
and $\E F_S(f)$, $2$, and $n 2 \epsilon^2 / (m V_f^2 )$ in part~\ref{t1.ii},
respectively.
For the interesting case where $n$ and $m$ both go to infinity at the
same rate,
the next example shows that these bounds can not be
improved qualitatively without imposing additional assumptions.

\begin{example}
\label{ex1}
Let $n=m=2^k$, and let $X$ be the $n\times n$ matrix whose
$i$-th row is $R_i v_i'$, where $R_1,\dots, R_n$ are i.i.d. with $\P(R_1 = 0) = \P(R_1 = 1) = 1/2$, and where $v_1,\dots,v_n$ are orthogonal $n$-vectors with $v_i \in \{-1,1\}^n$ for each $i$. [The $v_i$'s can be
obtained, say, from the first $n$ binary Walsh functions; cf.~\citep{Wal23a}.]
Note that the eigenvalues of $S = X'X/m$ are $R_1^2,\dots, R_n^2$.
Set $f(x) = x$ for $x\in \{0,1\}$.
Then
$n F_S(f)$ is binomial distributed with parameters $n$ and $1/2$, i.e.,
$n F_S(f) \sim B(n,1/2)$.
By Chernoff's method (cf. Theorem~1 of~\citep{Che52a}),
we hence obtain that
\begin{equation}\label{ex1.1}
\P(F_S(f) -\E F_S(f) \geq \epsilon)  \quad=\quad \exp\left[ -n (C(\epsilon) + o(1)) \right],
\end{equation}
for $0 < \epsilon < 1/2$ and
as $n\to\infty$ with $k\to\infty$, where here $C(\epsilon)$ equals 
$\log(2)+(1/2+\epsilon)\log(1/2+\epsilon) + (1/2-\epsilon) \log(1/2-\epsilon)$;
the same is true if
$\E F_S(f)-F_S(f)$ replaces $F_S(f)-\E F_S(f)$ in \eqref{ex1.1}.
These statements continue to hold with $\text{med } F_S(f)$ replacing
$\E F_S(f)$, because the mean coincides with the median here.
To apply Theorem~\ref{t1}\ref{t1.i}, we extend $f$ by setting
$f(x) = \sqrt{|x|}$ for $x\in \R$; to apply Theorem~\ref{t1}\ref{t1.ii}, 
extend $f$ as $f(x) = 1\{x\leq 1/2\}$. Theorem~\ref{t1}\ref{t1.i} and 
Theorem~\ref{t1}\ref{t1.ii} give us that the left hand side 
of~\eqref{ex1.1} is bounded by terms of the form 
$4 \exp\left[ -n C_1(\epsilon)\right]$ and 
$2 \exp\left[ -n C_2(\epsilon)\right]$, 
respectively, for some functions $C_1$ and $C_2$ of $\epsilon$. It 
is easy to check that $C(\epsilon)/C_i(\epsilon)$ is increasing in 
$\epsilon$ for $i = 1, 2$,
and that
\begin{equation*}
\lim_{\epsilon \downarrow 0} \frac{C(\epsilon)}{C_1(\epsilon)} = 
32 \qquad \text{ and } \qquad \lim_{\epsilon \downarrow 0} 
\frac{C(\epsilon)}{C_2(\epsilon)} = 1.
\end{equation*}
Hence, both parts of Theorem~\ref{t1} give upper bounds 
with the correct rate ($-n$) in the exponent. 
The constants $C_i(\epsilon)$, $i=1,2$, both are sub-optimal, i.e., they
are too small,
but the constant
$C_2(\epsilon)$, which is obtained from 
Theorem~\ref{t1}\ref{t1.ii}, is close to the optimal constant
for small $\epsilon$.
\end{example}

Under additional assumptions on the law of $X$, 
$F_S(f)$ can concentrate faster than indicated by \eqref{bnd}. 
In particular, 
in the setting of Theorem~\ref{t1}\ref{t1.i} and
for the case where
all the elements $X_{i,j}$ of $X$ are independent,
Guionnet and  Zeitouni~\citep{Gui00a}
obtained bounds of the same form as \eqref{bnd} but with $n^2$ 
replacing $n$ in the exponent, for functions $f$ such that 
$x \mapsto f(x^2)$ is convex and Lipschitz. 
(This should be compared with Example~\ref{gzgen} below.)
However, if 
$f$ does not satisfy this requirement, but is of bounded variation on 
$\R$ so that Theorem~\ref{t1}\ref{t1.ii} applies, then the upper bound 
in~\eqref{t1.ii.1} can not be improved qualitatively without additional 
assumptions, even in the case when all the elements $X_{i,j}$ of $X$ 
are independent. This is demonstrated by the following example.

\begin{example}
\label{ex2}
Let $X$ be the $n\times n$ diagonal matrix $\text{diag}(R_1,\dots, R_n)$, 
where $R_1,\dots, R_n$ are as in Example~\ref{ex1}. Set $f(x) = 1\{x\leq 0\}$. Clearly, Theorem~\ref{t1}\ref{t1.ii} applies here so that the left hand side~\eqref{t1.ii.1} is bounded by $2 \exp\left[ -n C_2(\epsilon)\right]$ for $C_2(\epsilon)$ as in Example~\ref{ex1}. Moreover, since for each $i$, $f(R_i^2/n) = 1 - R_i$, it follows that $n F_S(f) \sim B(n,1/2)$, and then \eqref{ex1.1} holds again.
\end{example}

Theorem~\ref{t1} can also be used to get concentration inequalities for the empirical distribution of the singular values of a non-symmetric $n \times m$ matrix $X$ with independent rows. Indeed, the $i$-th singular value of $X/\sqrt{m}$ is just the square root of the $i$-th eigenvalue of $X'X/m$.

Both parts of
Theorem~\ref{t1} are in fact special cases of more general results
that are presented next. The following two theorems, the first of which should be compared with Theorem~1.1(a) of \citep{Gui00a},  apply
to a variety of random matrices besides those considered in Theorem~\ref{t1};
some examples are given later in this section. 

\begin{theorem}\label{t2}
Let $M$ be a random symmetric $n\times n$ matrix that is a function of
$m$ independent $[-1,1]^p$-valued random vectors $Y_1,\dots, Y_m$ i.e., $M=M(Y_1,\dots, Y_m)$.
Assume that $M(\cdot)$ is linear and Lipschitz with Lipschitz constant 
$C_M$
when considered as a function from $[-1,1]^{m p}$ with the Euclidean norm
to the set of all symmetric $n\times n$ matrices with the Euclidean norm on $\R^{n(n+1)/2}$ (we view symmetric $n \times n$ matrices as elements of $\R^{n(n+1)/2}$ by collecting the entries on and above the diagonal).
Finally, assume that
$f:\R\to\R$ is convex and Lipschitz with Lipschitz constant $||f||_L$.
For $S=M/\sqrt{m}$, we then have
\begin{equation}
\label{t2.1}
\P\left(
\left| 
F_{S}(f) - \text{med }F_{S}(f)
\right|\,\geq\,\epsilon
\right)
\quad\leq\quad
4 \exp\left[
	- \frac{n m }{p}					
	\frac{\epsilon^2}{32 C_M^2 ||f||_L^2}
\right]
\end{equation}
for each $\epsilon>0$.
\end{theorem}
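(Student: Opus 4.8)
The plan is to reduce the statement to Talagrand's concentration inequality for convex Lipschitz functions on a product of bounded intervals. The underlying probability space is $([-1,1]^p)^m$ equipped with the product measure of the laws of $Y_1, \dots, Y_m$, which we identify with a subset of Euclidean space $\R^{mp}$ of diameter $2\sqrt{mp}$. On this product space I would consider the function
\begin{equation}\nonumber
\Phi(y_1, \dots, y_m) \;=\; F_S(f) \;=\; \frac{1}{n}\sum_{i=1}^n f\bigl(\lambda_i(S)\bigr),
\qquad S = M(y_1,\dots,y_m)/\sqrt{m},
\end{equation}
and the whole task is to show that $\Phi$ is convex and Lipschitz on $[-1,1]^{mp}$, with a Lipschitz constant that can be read off from $C_M$, $\|f\|_L$, and the dimensions. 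Once that is done, Talagrand's inequality (or the slight extension of it alluded to in the introduction) gives deviation of $\Phi$ from its median at scale controlled by $(\text{Lip const})^2$ times the squared diameter, which after bookkeeping yields \eqref{t2.1}.

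For the convexity and Lipschitz bound I would proceed through the spectral-mapping structure. Since $y \mapsto M(y)$ is linear, $y \mapsto S = M(y)/\sqrt m$ is linear, so it suffices to understand the map from symmetric matrices (with the Frobenius/Euclidean norm on $\R^{n(n+1)/2}$) to $\R$ given by $A \mapsto \frac1n\sum_i f(\lambda_i(A))$. The key classical facts are: (a) if $f$ is convex, then $A \mapsto \sum_i f(\lambda_i(A))$ is convex on symmetric matrices (this is the convexity of $\mathrm{tr}\, f(A)$, a standard consequence of the Courant--Fischer variational characterization of eigenvalues together with convexity of $f$); and (b) if $f$ is Lipschitz with constant $\|f\|_L$, then $\bigl|\sum_i f(\lambda_i(A)) - \sum_i f(\lambda_i(B))\bigr| \le \|f\|_L \sum_i |\lambda_i(A)-\lambda_i(B)| \le \|f\|_L \sqrt n\, \|\lambda(A)-\lambda(B)\|_2 \le \|f\|_L\sqrt n\,\|A-B\|_F$, using the Hoffman--Wielandt inequality for the last step. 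Composing with the linear map $y \mapsto S$, whose operator norm from $(\R^{mp},\|\cdot\|_2)$ to $(\R^{n(n+1)/2},\|\cdot\|_F)$ is $C_M/\sqrt m$, and dividing by $n$, we get that $\Phi$ is convex and Lipschitz with
\begin{equation}\nonumber
\|\Phi\|_L \;\le\; \frac1n \cdot \|f\|_L \sqrt n \cdot \frac{C_M}{\sqrt m}
\;=\; \frac{\|f\|_L\, C_M}{\sqrt{nm}}.
\end{equation}

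With this in hand, the finish is routine: Talagrand's inequality on a product of intervals of total (squared) diameter $4mp$ states that a convex $L$-Lipschitz function concentrates around its median with Gaussian tail $\exp[-t^2/(16 L^2 \cdot 4mp)]$ or similar; plugging $L = \|f\|_L C_M/\sqrt{nm}$ and collecting constants produces the exponent $-\,\frac{nm}{p}\,\frac{\epsilon^2}{32 C_M^2\|f\|_L^2}$, and the factor $4$ in front comes from applying the one-sided bound to $\Phi$ and to $-\Phi$ (note $-\Phi$ is concave, not convex, so one uses the two-sided form of Talagrand's inequality, which is exactly the ``slight extension'' mentioned in the introduction, giving the $4$ rather than $2$). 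The main obstacle is the regularity claim for $\Phi$ — specifically making sure that convexity of $f$ genuinely transfers to convexity of $A \mapsto \mathrm{tr}\,f(A)$ on the \emph{affine-parametrized} family $S(y)$ and that the Hoffman--Wielandt estimate is applied with the correct normalization between the $\ell_2$ norm on eigenvalue vectors and the Frobenius norm; everything after that is constant-tracking. I would also remark that this argument specializes to Theorem~\ref{t1}\ref{t1.i} by taking $Y_i$ to be the $i$-th row of $X$, $p = n$, $M(Y_1,\dots,Y_m)$ built so that $M/\sqrt m$ has the singular-value structure of $X/\sqrt m$, and $f(\cdot)$ replaced by $f(\cdot^2)$, with $C_M$ computed explicitly for that linear map.
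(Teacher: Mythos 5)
Your overall strategy is the same as the paper's: regard $F_S(f)$ as a function of $(Y_1,\dots,Y_m)\in([-1,1]^p)^m$, show it is convex and Lipschitz by composing the linear map $y\mapsto M(y)/\sqrt m$ with the spectral functional $A\mapsto \frac1n\sum_i f(\lambda_i(A))$ (the convexity and Hoffman--Wielandt facts you sketch are exactly the content of Lemma~\ref{tracefunction}, cited from Guionnet--Zeitouni), and then invoke a Talagrand-type median-concentration inequality. The genuine gap is in the concentration step: the inequality you actually write down, a tail of the form $\exp[-t^2/(16L^2\cdot 4mp)]$ with the \emph{total} squared diameter $4mp$ of the product space in the denominator, is not the right statement and would not yield \eqref{t2.1}. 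Talagrand's inequality for convex Lipschitz functions of independent bounded coordinates is dimension-free in the number $m$ of coordinates; only the size of each individual coordinate block enters. The form needed here (Theorem~\ref{mytal}, obtained from Talagrand's Theorem 6.1 by the argument of his Theorem 6.6 with $[-1,1]^p$ in place of $[-1,1]$) is $\P(|T-\text{med }T|\ge\epsilon)\le 4\exp\bigl(-\epsilon^2/(16\,p\,\sigma^2)\bigr)$, with $p$ but not $m$ appearing. Plugging your $L=\|f\|_L C_M/\sqrt{nm}$ into the formula you quoted gives an exponent proportional to $n/p$, not $nm/p$; the factor $m$, which is the entire point of the theorem, does not appear, so ``collecting constants'' cannot produce the claimed bound. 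You must state and justify the correct extension (this is precisely the ``slight extension'' of Talagrand the paper refers to), rather than leave it at ``or similar.''

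A second, smaller inaccuracy: you drop a factor $\sqrt2$ in the Lipschitz computation. The constant $C_M$ is defined relative to the Euclidean norm on $\R^{n(n+1)/2}$ (each off-diagonal entry counted once), whereas Hoffman--Wielandt is stated for the Frobenius norm (off-diagonal entries counted twice), and one only has $\|A\|_F\le\sqrt2\,\|A\|_E$. Carrying this through gives $\|\Phi\|_L\le \sqrt{2/(nm)}\,C_M\|f\|_L$, as in Lemma~\ref{tracefunction}\ref{trace2}, not $C_M\|f\|_L/\sqrt{nm}$; and it is exactly this $\sqrt2$, combined with the $16\,p\,\sigma^2$ of Theorem~\ref{mytal}, that produces the constant $32$ in \eqref{t2.1}. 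With your stated $L$ and your stated inequality the constants are internally inconsistent with the bound you claim to recover.
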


\begin{theorem}\label{t3}
Let $M$ be a random symmetric $n\times n$ matrix that is a function of
$m$ independent random quantities $Y_1,\dots, Y_m$, i.e.,
$M = M(Y_1,\dots, Y_m)$. Write $M_{(i)}$ for the matrix obtained from $M$
after replacing $Y_i$ by an independent copy, i.e.,
$M_{(i)} = M(Y_1,\dots, Y_{i-1}, Y_i^\ast, Y_{i+1},\dots,Y_m)$
where $Y_i^\ast$ is distributed as $Y_i$ and independent of
$Y_1,\dots, Y_m$ $(i=1,\dots,m)$.
For $S=M/\sqrt{m}$ and $S_{(i)} = M_{(i)}/\sqrt{m}$, assume that
\begin{equation}\label{t3.1}
||F_S - F_{S_{(i)}}||_\infty \quad\leq \quad r/n
\end{equation}
holds (almost surely) 
for each $i=1,\dots,m$ and for some (fixed) integer $r$. Finally,
assume that $f:\R\to\R$ is of bounded variation on $\R$.
For each $\epsilon>0$, we then have
\begin{equation}\label{t3.2}
\P \left( \left| F_S(f) - \E F_S(f) \right| \, \geq \, \epsilon \right)
\quad\leq\quad 2 \exp \left[ -\frac{n^2}{m}  
	\frac{ 2\epsilon^2}{r^2 V_f^2(\R)}
\right].
\end{equation}
Also, if $a$ and $b$, $-\infty \leq a < b \leq \infty$, are such that 
$\P a < \lambda_1(S) \text{ and } \lambda_n(S) < b  = 1$, 
then~\eqref{t3.2} holds for each function $f : (a, b) \rightarrow \R$ of 
bounded variation on $(a, b)$, where now $V_f(a, b)$ replaces $V_f(\R)$ on 
the right hand side of~\eqref{t3.2}. 
\end{theorem}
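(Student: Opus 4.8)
The plan is to view $g(y_1,\dots,y_m):=F_S(f)$, with $S=M(y_1,\dots,y_m)/\sqrt m$, as a function of the $m$ independent arguments $Y_1,\dots,Y_m$, and to apply McDiarmid's bounded difference inequality~\citep{Mcd89a}. The only thing that has to be checked is that $g$ satisfies a bounded difference condition with a constant not depending on $i$. Since replacing the $i$-th argument $Y_i$ by its independent copy $Y_i^\ast$ turns $S$ into $S_{(i)}$, what is needed is that $|F_S(f)-F_{S_{(i)}}(f)|\leq (r/n)\,V_f(\R)$ hold almost surely. Given~\eqref{t3.1}, this would follow from the purely deterministic inequality
\begin{equation*}
\left| \int f\,dF - \int f\,dG \right| \;\leq\; \| F - G \|_\infty \, V_f(\R),
\end{equation*}
valid for any two cumulative distribution functions $F,G$ of probability measures on $\R$ and any $f$ of bounded variation, applied with $F=F_S$ and $G=F_{S_{(i)}}$ (recall that $F_M(f)=\int f\,dF_M$).

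I expect this deterministic estimate to be essentially the only real step; the rest is routine. To prove it I would use the Jordan decomposition $f=f^+-f^-$ into bounded nondecreasing functions whose total variations add up to $V_f(\R)$, so that by linearity it is enough to treat a single bounded nondecreasing $g_0$. Writing $g_0(x)=g_0(-\infty)+\int \mathbf 1\{t\leq x\}\,dg_0(t)$ and interchanging the order of integration (i.e.\ integrating by parts) gives $\int g_0\,dF-\int g_0\,dG=-\int(F-G)\,dg_0$, the boundary contributions vanishing since $F-G\to0$ at $\pm\infty$; this difference is therefore at most $\|F-G\|_\infty\,(g_0(\infty)-g_0(-\infty))=\|F-G\|_\infty\,V_{g_0}(\R)$ in absolute value, and adding the contributions of $f^+$ and $f^-$ finishes the estimate. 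Hence $g$ has bounded difference constant $c_i=rV_f(\R)/n$ for every $i$. (One minor point: McDiarmid's inequality is usually stated with the bounded difference condition holding everywhere, whereas here it holds only almost surely; inspection of the proof shows this causes no difficulty.)

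McDiarmid's inequality would then give
\begin{equation*}
\P\!\left( \left| F_S(f) - \E F_S(f) \right| \geq \epsilon \right)
\;\leq\; 2\exp\!\left( -\frac{2\epsilon^2}{\sum_{i=1}^m c_i^2} \right)
\;=\; 2\exp\!\left( -\frac{n^2}{m}\,\frac{2\epsilon^2}{r^2 V_f^2(\R)} \right),
\end{equation*}
which is~\eqref{t3.2}.

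For the final assertion, concerning $f:(a,b)\to\R$, I would argue as follows. The event $\{a<\lambda_1(S)\ \text{and}\ \lambda_n(S)<b\}$ has probability one, and since $S_{(i)}$ has the same distribution as $S$, so does the corresponding event for $S_{(i)}$. On the intersection of these two events every eigenvalue of $S$ and of $S_{(i)}$ lies in $(a,b)$, so $F_S$ and $F_{S_{(i)}}$ both vanish on $(-\infty,a]$ and both equal $1$ on $[b,\infty)$; consequently $\|F_S-F_{S_{(i)}}\|_\infty=\sup_{a<\lambda<b}|F_S(\lambda)-F_{S_{(i)}}(\lambda)|\leq r/n$ by~\eqref{t3.1}, while $F_S(f)$ and $F_{S_{(i)}}(f)$ depend only on the values of $f$ on $(a,b)$. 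Running the integration-by-parts argument on $(a,b)$ instead of $\R$ — the boundary terms again vanish, because $F_S-F_{S_{(i)}}\to0$ as $\lambda\downarrow a$ and as $\lambda\uparrow b$ — gives $|F_S(f)-F_{S_{(i)}}(f)|\leq (r/n)\,V_f(a,b)$ almost surely, and McDiarmid's inequality then delivers~\eqref{t3.2} with $V_f(a,b)$ in place of $V_f(\R)$.
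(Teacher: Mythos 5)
Your proposal is correct and follows essentially the same route as the paper: both reduce the theorem to McDiarmid's bounded difference inequality after establishing the deterministic estimate $|F_S(f)-F_{S_{(i)}}(f)|\leq \|F_S-F_{S_{(i)}}\|_\infty\,V_f$, which the paper obtains by Riemann--Stieltjes integration by parts on an interval containing all eigenvalues and you obtain by a Jordan decomposition plus Fubini argument -- a purely cosmetic difference. Your treatment of the $(a,b)$ case (restricting to the almost sure event and repeating the integration-by-parts step there) likewise matches the paper's proof, which simply proves that case first and notes the $\R$ case as a special instance.
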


To apply Theorem~\ref{t3}, one needs to establish the inequality in \eqref{t3.1}
for each $i=1,\dots, m$. This can often be accomplished by using the following
lemma, which is taken from Bai~\citep{Bai99a}, Lemma 2.2 and~2.6,
and which is a simple consequence of the interlacing theorem.
[Consider
	a symmetric $n\times n$ matrix $A$ and denote
	its $(n-1)\times (n-1)$ major submatrix by $B$. The interlacing
	theorem, a direct consequence of the Courant-Fisher formula,
	states that $\lambda_i(A) \leq \lambda_i(B) \leq \lambda_{i+1}(A)$
	for $i=1,\dots,n-1$.]

\begin{lemma}\label{bai}
Let $A$ and $B$ be symmetric $n \times n$ matrices and let $X$ and $Y$ be $m \times n$ matrices. Then the following inequalities hold:
\begin{equation*}
||F_A - F_B||_{\infty} \quad \leq \quad \frac{\text{rank}(A - B)}{n},
\end{equation*}
and
\begin{equation*}
||F_{X'X} - F_{Y'Y}||_{\infty} \quad \leq \quad \frac{\text{rank}(X - Y)}{n}.
\end{equation*}
\end{lemma}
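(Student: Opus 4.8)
The plan is to treat both inequalities as instances of a single principle: if two symmetric $n\times n$ matrices have ``governing quadratic forms'' (the maps $v\mapsto v'Av$ and $v\mapsto v'Bv$, respectively $v\mapsto \|Xv\|^2$ and $v\mapsto \|Yv\|^2$) that agree on a subspace of codimension at most $\rho$, then their sorted eigenvalue lists are shifted by at most $\rho$ positions, and hence their spectral c.d.f.'s differ by at most $\rho/n$ in sup norm. One could alternatively iterate the bordered-matrix interlacing theorem recalled above across $\rho$ successive rank-one updates, but the min--max route is shorter and delivers the constant $1$ in front of the rank with no extra effort.

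For the first inequality I would set $C=A-B$, let $k=\operatorname{rank}(C)$, and take $V=\ker C$, so that $\dim V\ge n-k$ and $v'Av=v'Bv$ for every $v\in V$ (because $Cv=0$). Writing eigenvalues in increasing order and using Courant--Fischer in the form $\lambda_i(M)=\max_{\dim W=n-i+1}\ \min_{0\neq v\in W}v'Mv/(v'v)$, I would feed a maximising subspace $W$ for $\lambda_i(B)$ into the formula for $\lambda_{i+k}(A)$ after intersecting with $V$: since $\dim(W\cap V)\ge n-i+1-k$ and the Rayleigh quotients of $A$ and $B$ agree on $W\cap V$ (and are $\ge\lambda_i(B)$ there), this gives $\lambda_{i+k}(A)\ge\lambda_i(B)$ for $1\le i\le n-k$; the symmetric argument gives $\lambda_{i+k}(B)\ge\lambda_i(A)$. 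These two families of inequalities force $\big|\#\{i:\lambda_i(A)\le x\}-\#\{i:\lambda_i(B)\le x\}\big|\le k$ for every real $x$, i.e.\ $\|F_A-F_B\|_\infty\le k/n$.

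For the second inequality I would run the same argument on the pair $X'X$, $Y'Y$, but with $V=\ker(X-Y)\subseteq\R^n$: setting $Z=X-Y$ and $r=\operatorname{rank}(Z)$ we have $\dim V\ge n-r$ by rank-nullity, and for $v\in V$ the identity $Xv=Yv$ gives $v'(X'X)v=\|Xv\|^2=\|Yv\|^2=v'(Y'Y)v$, so the governing quadratic forms again agree on a subspace of codimension at most $r$. The same min--max manipulation then yields $\lambda_{i+r}(X'X)\ge\lambda_i(Y'Y)$ together with the symmetric inequality, hence $\|F_{X'X}-F_{Y'Y}\|_\infty\le r/n$. It is worth noting that the tempting shortcut of bounding $\operatorname{rank}(X'X-Y'Y)\le 2\operatorname{rank}(Z)$ and invoking the first part would only produce $2r/n$, so the direct comparison of quadratic forms on $\ker Z$ --- rather than a reduction to the first part --- is what is actually needed here.

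The one genuinely delicate point is the index bookkeeping in the min--max step: one must line up ``dimension of the test subspace'' with ``which eigenvalue'' correctly in the Courant--Fischer formula, and separately dispose of the boundary cases where $i+k>n$ (resp.\ $i+r>n$) or where fewer than $k$ (resp.\ $r$) eigenvalues lie below the threshold, in which the asserted inequalities are vacuous. Past that, the argument is mechanical.
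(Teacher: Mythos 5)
Your argument is correct: the shifted min--max inequalities $\lambda_{i+k}(A)\ge\lambda_i(B)$ and $\lambda_{i+k}(B)\ge\lambda_i(A)$ for $1\le i\le n-k$ do follow from Courant--Fischer applied to the intersection of a maximising subspace with $\ker(A-B)$ (resp.\ $\ker(X-Y)$), and they do force the eigenvalue counting functions to differ by at most $k$ (resp.\ $r$) at every threshold, which is exactly the sup-norm bound. The paper itself gives no proof --- it quotes the lemma from Bai's survey (Lemmas 2.2 and 2.6 there) and merely remarks that it is a simple consequence of the interlacing theorem for a symmetric matrix and its major submatrix. Your route is the natural ``rank-$k$ interlacing'' version of the same Courant--Fischer idea, carried out in one shot rather than by iterating bordered-matrix interlacing or by passing (as Bai does for the covariance case) through the augmented symmetric matrix $\bigl(\begin{smallmatrix}0 & X'\\ X & 0\end{smallmatrix}\bigr)$; what it buys is a short, self-contained argument that treats both inequalities uniformly, and your observation that reducing the second inequality to the first via $\text{rank}(X'X-Y'Y)\le 2\,\text{rank}(X-Y)$ would degrade the constant to $2r/n$ is exactly the right reason for comparing the quadratic forms on $\ker(X-Y)$ directly. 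The only point to make fully explicit in a written-up version is the dimension count $\dim(W\cap V)\ge\dim W+\dim V-n$ together with the remark that the max--min bound for $\lambda_{i+k}$ may be applied to any subspace of dimension at least $n-(i+k)+1$; with that, the index bookkeeping you flag as delicate goes through without incident.
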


We now give some examples where Theorem~\ref{t2} or 
Theorem~\ref{t3} can be applied, the latter with the help of Lemma~\ref{bai}.

\begin{example} \label{ex3}
Consider a network of, say, social connections or relations between
a group of $n$ entities that enter the  group sequentially and that establish
connections to group members that entered before as follows:
For the $i$-th entity that enters the group,
connections to the existing group members, labeled 
$1,\dots, i-1$, are chosen according to some probability distribution,
independently of the choices made by all the other entities.
Denote the $n\times n$ adjacency matrix of the
resulting random graph by $M$, 
and write $Y_i$ for the $n$-vector $(M_{i,1}, M_{i,2}, \dots, M_{i,i}, 0, \dots, 0)'$ for $i = 1, \dots, n$. By construction, $Y_1,\dots, Y_n$ are independent and $M$ (when considered as a function of $Y_1, \dots, Y_n$ as in Theorem~\ref{t2}) is linear and Lipschitz with Lipschitz constant 1. Hence Theorem~\ref{t2} is applicable with $m = p = n$ and $C_M = 1$. 

Theorem~\ref{t3} can also be applied here. To check condition~\eqref{t3.1},  write $M_{(i)}$ for the matrix obtained from $M$ by 
replacing $Y_i$ by an independent copy denoted by $Y_i^\ast$ as in Theorem~\ref{t3}.
Clearly, the $i$-th row of the matrix $M-M_{(i)}$ 
equals 
$\delta_i = (Y_{i,1} - Y_{i,1}^\ast, \dots Y_{i,i} - Y_{i,i}^\ast, 0,\dots, 0)$,
the $i$-th column of $M-M_{(i)}$ equals $\delta_i'$, and the
remaining elements of $M-M_{(i)}$ all equal zero.
Therefore, the rank of $M-M_{(i)}$ is at most two. Using Lemma~\ref{bai}, we see that Theorem~\ref{t3} is applicable here with $r = 2$ and $m = n$.
\end{example}

The following two examples deal with the sample covariance matrix of vector moving average (MA) processes. For the sake of simplicity, we only consider MA processes of order 2. Our arguments can be extended to also handle MA processes  of any fixed and finite order. In Example~\ref{ma}, we consider an MA(2) process with independent innovations, allowing for arbitrary dependence within each innovation, and obtain concentration inequalities of the form~\eqref{bnd}. In Example~\ref{gzgen}, we consider the case where each innovation has independent components (up to a linear function) and obtain a concentration inequality of the form~\eqref{bnd} but with $n^2$ replacing $n$ in the exponent.

\begin{example}\label{ma}
Consider an $m \times n$ matrix $X$ whose row-vectors follow a 
vector MA process of order 2 i.e., $(X_{i,.})' = Y_{i+1} + B Y_i$ 
for $i = 1 \dots m$, where $Y_1, \dots Y_{m+1}$ are $m+1$ independent 
$n$-vectors and $B$ is some fixed $n \times n$ matrix. Set $S = X'X/m$.
\begin{list}{\thercnt}{\usecounter{rcnt}\setlength{\leftmargin}{5pt}}
\item \label{ma.i}
Suppose that $f$ is such that the mapping $x\mapsto f(x^2)$ is convex
and Lipschitz,
and suppose that $Y_i \in \left[ -1, 1 \right]^n$ for each 
$i = 1, \dots, m + 1$.
For each $\epsilon>0$, we have
\begin{equation}\label{ma.i.eq}
\P \left( \left| F_S(f) -  \text{med } F_S(f) \right|  \geq \epsilon \right)
\quad\leq\quad 
4 \exp\left[
-\frac{ n m^2 }{ (m+1)(n+m)} \,\frac{ \epsilon^2}{8 C_B^2 ||f(\cdot^2)||_L^2}
\right].
\end{equation}
Here $C_B$ equals $1 + ||B||$, where $||B||$ is the operator norm of the 
matrix $B$.
\item \label{ma.ii}
Suppose that $f$ is of bounded variation on $\R$. For each $\epsilon>0$, we 
then have
\begin{equation}\label{ma.ii.eq}
\P \left( \left| F_S(f) - \E F_S(f) \right| \geq \epsilon \right)
\quad\leq\quad 2 \exp \left[ -\frac{n^2}{m + 1} \frac{ \epsilon^2}{2 V_f^2(\R)}
\right].
\end{equation}
\end{list}
The proofs of~\eqref{ma.i.eq} and~\eqref{ma.ii.eq} follow essentially the 
same argument as used in the proof of Theorem~\ref{t1}  
using the particular structure of the matrix $X$ as considered here. 
\end{example}

\begin{example}\label{gzgen}
As in Example~\ref{ma}, consider an $m\times n$ matrix $X$
whose row-vectors follow a vector MA(2) process 
$(X_{i,\cdot})' = Y_{i+1} + B Y_i$  for some fixed $n\times n$ matrix $B$,
$i=1,\dots,m$. For the innovations $Y_i$, we now assume
that $Y_i = U Z_i$, where  $U$ is a fixed $n\times n$ matrix, and where
the $Z_{i,j}$, $i=1,\dots, m+1$, $j=1,\dots,n$, are independent
and satisfy $|Z_{i,j}| \leq 1$.  Set $S = X'X/m$.
For a function $f$
such that the mapping $x\mapsto f(x^2)$ is convex and Lipschitz,
we then obtain that
\begin{equation}\label{gzgen.eq}
\P \left( \left| F_S(f) -  \text{med } F_S(f) \right|  \geq \epsilon \right)
\quad\leq\quad 
4 \exp\left[
	-\frac{ n^2 m }{ n+m} \,\frac{ \epsilon^2}{8 C^2 ||f(\cdot^2)||_L^2}
\right]
\end{equation}
for each $\epsilon>0$, where $C$ is shorthand for $C=(1+||B||) \,||U||$
with $||B||$ and $||U||$ denoting the operator norms of the indicated matrices.
The relation 
\eqref{gzgen.eq} is derived by essentially repeating
the proof of Theorem~\ref{t1}\ref{t1.i} and by employing the particular 
structure of the matrix $X$ as considered here.

We note that the statement in the previous paragraph reduces to
Corollary 1.8(a) in~\citep{Gui00a} if one sets $B$ to the zero matrix
and $U$ to the identity matrix.
Moreover, we 
note that Theorem~\ref{t3} can also be applied here (similarly to
Example~\ref{ma}\ref{ma.ii}), but the resulting upper bound does
not improve upon \eqref{ma.ii.eq}.
\end{example}

\begin{appendix}

\section{Proofs}

We first prove Theorem~\ref{t2} and Theorem~\ref{t3} and then use these 
results to deduce Theorem~\ref{t1}. The proof of Theorem~\ref{t2} is 
modeled after the proof of Theorem 1.1(a) in 
Guionnet and Zeitouni~\citep{Gui00a}. It rests on a slight modification of 
Theorem 6.6 of Talagrand~\citep{Tal96b} that is given as 
Theorem~\ref{mytal} below, and also on Lemma 1.2 from 
Guionnet and Zeitouni~\citep{Gui00a} that is restated as 
Lemma~\ref{tracefunction}, which follows. 

\begin{theorem}\label{mytal}
Fix $m \geq 1$ and $p \geq 1$. Consider a function 
$T : \left[ -1, 1 \right]^{mp} \rightarrow \R$ that is 
quasi-convex\footnote{A real valued function $T$ is said to be 
quasi-convex if all the level sets $\left\{ T \leq a \right\}, a \in \R$, 
are convex.} and Lipschitz with Lipschitz constant $\sigma$. 
Let $Y_1, \dots, Y_m$ be independent $p$-vectors, each taking values in 
$\left[ -1, 1\right]^p$ and consider the random variable 
$T = T(Y_1, \dots, Y_m)$. For each $\epsilon > 0$, we then have
\begin{equation}\label{talineq}
\P \left( |T - \text{med } T| \geq \epsilon \right) \leq 4 
\exp \left( - \frac{1}{p \sigma^2} \frac{\epsilon^2}{16} \right).
\end{equation}
\end{theorem}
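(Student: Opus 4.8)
\emph{Proof proposal.} The plan is to transcribe Talagrand's proof of his Theorem~6.6 (the scalar case $p=1$), the only new feature being that each ``coordinate'' is now a vector in the block $[-1,1]^p$ of Euclidean diameter $2\sqrt{p}$, which is the source of the factor $p$ in the exponent. View $[-1,1]^{mp}$ as the product $\prod_{i=1}^{m}[-1,1]^p$, write a generic point as $x=(x_1,\dots,x_m)$ with $x_i\in[-1,1]^p$, and endow the product with the law of $(Y_1,\dots,Y_m)$. For a subset $A$ of the product and a point $x$, recall that Talagrand's convex distance $d_T(x,A)$ is the Euclidean distance from the origin to the convex hull of $\{\, s\in\{0,1\}^m : \text{there is } y\in A \text{ with } y_i=x_i \text{ whenever } s_i=0 \,\}$, and that Talagrand's convex distance inequality gives $\P(A)\,\P\big(d_T(\cdot,A)\geq u\big)\leq\exp(-u^2/4)$ for all $u\geq0$. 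This inequality holds for products of \emph{arbitrary} probability spaces, which is precisely the mild extension of~\citep{Tal96b} that we invoke.

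The one substantive step is the geometric estimate: for every $a\in\R$ and every $x\in[-1,1]^{mp}$,
\[
T(x)\;\leq\;a+2\sqrt{p}\,\sigma\,d_T\!\left(x,A_a\right),\qquad A_a:=\{T\leq a\}.
\]
Since $T$ is quasi-convex, $A_a$ is convex. Take points $y^{(s)}\in A_a$, indexed by the $s$ appearing above, with $y^{(s)}_i=x_i$ whenever $s_i=0$, and a probability vector $(\lambda_s)$; then $z:=\sum_s\lambda_s\,y^{(s)}\in A_a$ by convexity, and in each coordinate $(i,j)$, using $y^{(s)}_{i,j}\in[-1,1]$ and $y^{(s)}_{i,j}=x_{i,j}$ when $s_i=0$, we get $|x_{i,j}-z_{i,j}|\leq 2\sum_{s:s_i=1}\lambda_s=2\rho_i$ with $\rho:=\sum_s\lambda_s\,s$, whence $|x-z|_2^2\leq 4p\,|\rho|_2^2$. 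Choosing $(\lambda_s)$ so that $\rho$ is the point of the above convex hull nearest the origin yields $z\in A_a$ with $|x-z|_2\leq 2\sqrt{p}\,d_T(x,A_a)$, and the Lipschitz property then gives $T(x)\leq T(z)+\sigma\,|x-z|_2\leq a+2\sqrt{p}\,\sigma\,d_T(x,A_a)$.

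Given this, set $b:=\text{med }T$ and $u:=\epsilon/(2\sqrt{p}\,\sigma)$. For the upper tail, apply the estimate with $a=b$: any $x$ with $T(x)\geq b+\epsilon$ satisfies $d_T(x,A_b)\geq u$, so, using $\P(A_b)\geq 1/2$ and the convex distance inequality, $\P(T\geq b+\epsilon)\leq\P(d_T(\cdot,A_b)\geq u)\leq 2\exp(-u^2/4)$. For the lower tail, apply the estimate with $a=b-\epsilon$: any $x$ with $T(x)\geq b$ satisfies $d_T(x,A_{b-\epsilon})\geq u$, so $\P(d_T(\cdot,A_{b-\epsilon})\geq u)\geq\P(T\geq b)\geq 1/2$, and the convex distance inequality gives $\P(T\leq b-\epsilon)=\P(A_{b-\epsilon})\leq 2\exp(-u^2/4)$. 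Since $u^2/4=\epsilon^2/(16p\sigma^2)$, summing the two bounds yields~\eqref{talineq}.

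I expect the only place demanding care is the geometric estimate — specifically the bookkeeping of the block diameter $2\sqrt{p}$ when passing from the combinatorial convex distance to the Euclidean distance, together with the elementary verification that the convex combination $z$ stays in $A_a$ because $A_a$ is convex. The remainder is a direct transcription of Talagrand's argument, so no further obstacles are anticipated.
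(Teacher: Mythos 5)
Your proposal is correct and follows exactly the route the paper indicates: it runs Talagrand's convex distance inequality (valid for products of arbitrary probability spaces) through his proof of Theorem~6.6, with the coordinate interval $[-1,1]$ replaced by the block $[-1,1]^p$, whose diameter contributes the factor $2\sqrt{p}$ and hence the $p$ in the exponent. The only cosmetic omission is the trivial case where the level set $A_{b-\epsilon}$ is empty, which can be dismissed since then $\P(T\leq b-\epsilon)=0$.
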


The above theorem follows from Theorem 6.1 of Talagrand~\citep{Tal96b} 
by arguing just like in the proof of Theorem 6.6 of Talagrand~\citep{Tal96b}, 
but now using $\left[ -1, 1 \right]^p$ instead of $\left[ -1, 1\right]$. 
When $p = 1$, Theorem~\ref{mytal} reduces to Theorem 6.6 of 
Talagrand~\citep{Tal96b}.

\begin{lemma}\label{tracefunction}
Let $\Acal^n$ denote the set of all real symmetric $n \times n$ matrices 
and let $u : \R \rightarrow \R$ be a fixed function. Let us denote by 
$\Lambda^n_u$ the functional $A \mapsto F_A(u)$ on $\Acal^n$. Then
\begin{list}{\thercnt}{\usecounter{rcnt}\setlength{\leftmargin}{5pt}}
\item \label{trace1} If $u$ is convex, then so is $\Lambda^n_u$. 
\item \label{trace2}
If $u$ is Lipschitz, then so is $\Lambda^n_u$ (when considering $\Acal^n$ 
with the Euclidean norm on $\R^{n(n+1)/2}$ by collecting the entries on and 
above the diagonal). Moreover, the Lipschitz constant of $\Lambda^n_u$ 
satisfies
\begin{equation*}
||\Lambda^n_u||_L \leq \frac{\sqrt{2}}{\sqrt{n}}||u||_L.
\end{equation*}
\end{list}
\end{lemma}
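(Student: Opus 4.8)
The plan is to establish both parts by relating the functional $\Lambda^n_u(A) = F_A(u) = \frac1n \sum_{i=1}^n u(\lambda_i(A))$ to well-known variational and smoothness properties of spectral functions of symmetric matrices. For part~\ref{trace1}, convexity of $A \mapsto \sum_i u(\lambda_i(A))$ (hence of $\Lambda^n_u$, after dividing by $n$) when $u$ is convex is a classical fact: I would prove it via the Ky Fan variational principle, which expresses $\sum_{i=1}^k \lambda_{n-i+1}(A)$ (the sum of the $k$ largest eigenvalues) as a supremum of linear functionals of $A$, hence as a convex function of $A$; summing appropriately and using that $u$ convex means $\frac1n\sum_i u(\lambda_i(A))$ can be written using these partial sums via Abel summation / the fact that convex functions are increasing functions of the ordered eigenvalue vector in the majorization order. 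Alternatively, and perhaps more cleanly, one can invoke the representation $\mathrm{tr}\, u(A) = \sup\{ \mathrm{tr}(UA) - \mathrm{tr}\, u^*(U) \}$ via the matrix (Legendre) conjugate, exhibiting $\mathrm{tr}\,u(A)$ as a supremum of affine functions of $A$. Either route gives convexity of $\Lambda^n_u$ directly.

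For part~\ref{trace2}, the key input is the Hoffman--Wielandt inequality: for symmetric $A, B$, one has $\sum_{i=1}^n (\lambda_i(A) - \lambda_i(B))^2 \leq \|A - B\|_F^2$, where $\|\cdot\|_F$ is the Frobenius norm on the full matrix (equivalently, $\sqrt{2}$ times the Euclidean norm on $\R^{n(n+1)/2}$ when we collect the on- and above-diagonal entries, since the off-diagonal entries are each counted twice in the Frobenius norm but once in the $\R^{n(n+1)/2}$ coordinatization). Then I would estimate
\begin{equation*}
|\Lambda^n_u(A) - \Lambda^n_u(B)| \;\leq\; \frac1n \sum_{i=1}^n |u(\lambda_i(A)) - u(\lambda_i(B))| \;\leq\; \frac{\|u\|_L}{n} \sum_{i=1}^n |\lambda_i(A) - \lambda_i(B)|,
\end{equation*}
and apply Cauchy--Schwarz to the last sum to get $\sum_i |\lambda_i(A)-\lambda_i(B)| \leq \sqrt{n}\,(\sum_i (\lambda_i(A)-\lambda_i(B))^2)^{1/2} \leq \sqrt{n}\,\|A-B\|_F$. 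Combining and converting the Frobenius norm to the Euclidean norm on $\R^{n(n+1)/2}$ (which introduces the factor $\sqrt 2$) yields $|\Lambda^n_u(A) - \Lambda^n_u(B)| \leq \frac{\|u\|_L}{n}\cdot\sqrt{n}\cdot\sqrt2\,\|A-B\|_{\R^{n(n+1)/2}} = \frac{\sqrt2}{\sqrt n}\,\|u\|_L\,\|A-B\|_{\R^{n(n+1)/2}}$, which is exactly the claimed Lipschitz bound.

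I expect the main obstacle to be purely bookkeeping rather than conceptual: namely, being careful about the normalization constant relating the Frobenius norm of a symmetric matrix to the Euclidean norm of its $\R^{n(n+1)/2}$ coordinate vector (the off-diagonal entries contribute with multiplicity two to $\|\cdot\|_F^2$), since getting this factor right is what produces the stated $\sqrt2$ and not some other constant. The genuinely nontrivial mathematical ingredients — the Ky Fan / conjugate-function representation for part~\ref{trace1} and the Hoffman--Wielandt inequality for part~\ref{trace2} — are standard and can be cited; since this lemma is attributed to Lemma~1.2 of Guionnet and Zeitouni~\citep{Gui00a}, one could also simply defer to that reference, but the self-contained argument above is short enough to include.
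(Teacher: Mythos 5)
Your proposal is correct, but note that the paper itself does not prove Lemma~\ref{tracefunction} at all: it defers entirely to the proof of Lemma~1.2 in Guionnet and Zeitouni~\citep{Gui00a}, with Lieb and Pedersen~\citep{liebtrace} cited for a simpler proof of part~\ref{trace1}. Your part~\ref{trace2} argument (Hoffman--Wielandt, then Cauchy--Schwarz, then the conversion between the Frobenius norm and the Euclidean norm on $\R^{n(n+1)/2}$) is essentially the cited argument and does yield exactly the constant $\sqrt{2}/\sqrt{n}$; one small correction is that the norm conversion is an inequality, $\|A-B\|_F \leq \sqrt{2}\,\|A-B\|_{\R^{n(n+1)/2}}$ (equality fails when the diagonal is nonzero), which is the direction you need anyway. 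For part~\ref{trace1}, your two suggested routes are both viable but are stated loosely: the majorization route should be phrased as the Ky Fan/Lidskii fact that the ordered spectrum of $(A+B)/2$ is majorized by the average of the ordered spectra of $A$ and $B$, combined with the Schur-convexity of $x \mapsto \sum_i u(x_i)$ for convex $u$ (this is essentially Klein's lemma); the Legendre-conjugate route, $\mathrm{tr}\, u(A) = \sup_U\{\mathrm{tr}(UA) - \mathrm{tr}\, u^*(U)\}$, additionally requires von Neumann's trace inequality to get the Fenchel--Young direction $\mathrm{tr}(UA) \leq \mathrm{tr}\,u(A) + \mathrm{tr}\,u^*(U)$, so it is not really lighter than the majorization argument. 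What your self-contained sketch buys over the paper's citation is independence from the references; what the citation buys is brevity and avoidance of exactly the bookkeeping and majorization details you would need to spell out.
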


\begin{remark}
For a proof of this lemma, see 
Guionnet and Zeitouni~\citep[Proof of Lemma 1.2]{Gui00a}. A simpler proof 
(along with other similar results) of Lemma~\ref{tracefunction}\ref{trace1} 
can be found in Lieb and Pedersen~\citep{liebtrace}. 
\end{remark}

\begin{proof}[Proof of Theorem~\ref{t2}]
Set $T = F_S(f)$ and let $\Acal^n$ be as in Lemma~\ref{tracefunction}. 
In view of Theorem~\ref{mytal}, it suffices to show that 
$T = T(Y_1, \dots, Y_m)$ is such that the function $T(\cdot)$ is 
quasi-convex and Lipschitz with Lipschitz constant 
$\leq  (2/(n m))^{1/2} C_M ||f||_L$. To this end, we write $T$ as the 
composition $T_2 \circ T_1$, where 
$T_1: \left([-1, 1]^p\right)^m \rightarrow \Acal^n$ and 
$T_2 : \Acal^n \rightarrow \R$ denote the mappings 
$(y_1, \dots, y_m) \mapsto M(y_1, \dots, y_m)/\sqrt{m}$ and 
$A \mapsto F_A(f)$, respectively. By assumption, $T_1$ is linear and Lipschitz 
with $||T_1||_L = C_M/\sqrt{m}$. Also, since $f$ is assumed to be convex and 
Lipschitz, Lemma~\ref{tracefunction} entails that $T_2$ is convex and 
Lipschitz with $||T_2||_L \leq (2/n)^{1/2} ||f||_L$. It follows that 
$T$ is convex (and hence quasi-convex) and Lipschitz with 
$||T||_L \leq (2/(n m))^{1/2}  C_M ||f||_L$. The proof is complete.
\end{proof}

To prove Theorem~\ref{t3}, we recall McDiarmid's bounded difference 
inequality (\citep{Mcd89a}; see also Proposition 12 in~\citep{Bou03a}):

\begin{proposition}\label{bdd}
Consider independent random quantities $Y_1,\dots, Y_m$, and
a (measurable) function $Z = f(Y_1,\dots, Y_m)$.  
For each $i=1,\dots,m$, define $Z_{(i)}$ like $Z$, but with $Y_i$
replaced by an independent copy; that is,
$Z_{(i)} = f(Y_1,\dots, Y_{i-1}, Y_i^\ast,Y_{i+1},\dots, Y_m)$,
where $Y_i^\ast$ is distributed as $Y_i$ and independent of
$Y_1,\dots, Y_m$. If
$$
\big| Z  - Z_{(i)}\big| \quad\leq \quad c_i
$$
holds (almost surely) for each $i=1,\dots,m$, then, for each $\epsilon > 0$, both $\P\left( Z - \E Z \geq \epsilon \right)$ and $\P\left( Z - \E Z \leq - \epsilon \right)$ are bounded by $\exp\left[ - 2 \epsilon^2 / \sum_{i=1}^m c_i^2 \right]$.
\end{proposition}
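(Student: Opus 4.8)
The plan is to use the Doob martingale associated with $Z$ together with the Azuma--Hoeffding argument. First I would set $D_k = \E[Z \mid Y_1,\dots,Y_k]$ for $k=0,1,\dots,m$; since $Z = f(Y_1,\dots,Y_m)$ is a function of all $m$ variables, $D_0 = \E Z$ and $D_m = Z$. Writing $\Delta_k = D_k - D_{k-1}$, we have $Z - \E Z = \sum_{k=1}^m \Delta_k$ and $\E[\Delta_k \mid Y_1,\dots,Y_{k-1}] = 0$ by the tower property.

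The key step is to show that, conditionally on $Y_1,\dots,Y_{k-1}$, the increment $\Delta_k$ takes values in an interval of length at most $c_k$. To this end, introduce $g_k(y) = \E[f(Y_1,\dots,Y_{k-1},y,Y_{k+1},\dots,Y_m) \mid Y_1,\dots,Y_{k-1}]$, where the conditional expectation integrates out only $Y_{k+1},\dots,Y_m$. Because $Y_k$ is independent of $(Y_1,\dots,Y_{k-1},Y_{k+1},\dots,Y_m)$, we get $D_k = g_k(Y_k)$ and $D_{k-1} = \E[g_k(Y_k)\mid Y_1,\dots,Y_{k-1}]$. For any two values $y,y'$ in the essential range of $Y_k$, the bounded-difference hypothesis applied with the independent copy $Y_k^\ast$ taking the value $y'$ gives $|f(\dots,y,\dots) - f(\dots,y',\dots)| \le c_k$ for almost every choice of the remaining coordinates, whence $|g_k(y) - g_k(y')| \le c_k$. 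Consequently $\Delta_k = g_k(Y_k) - \E[g_k(Y_k)\mid Y_1,\dots,Y_{k-1}]$ lies, conditionally on $Y_1,\dots,Y_{k-1}$, in an interval $[a_k,b_k]$ with $b_k - a_k \le c_k$, where $a_k,b_k$ are $\sigma(Y_1,\dots,Y_{k-1})$-measurable.

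Next I would invoke Hoeffding's lemma in conditional form: a random variable with conditional mean zero taking values in an interval of (conditional) length $\ell$ has conditional moment generating function bounded by $\exp(\lambda^2 \ell^2/8)$; this is the standard unconditional statement applied pointwise in the conditioning variables. Applied to $\Delta_k$ with $\ell \le c_k$, this gives $\E[e^{\lambda \Delta_k} \mid Y_1,\dots,Y_{k-1}] \le \exp(\lambda^2 c_k^2/8)$ for every $\lambda \in \R$. Iterating from $k=m$ down to $k=1$ via the tower property yields $\E[e^{\lambda(Z - \E Z)}] \le \exp\bigl(\lambda^2 \sum_{i=1}^m c_i^2 / 8\bigr)$. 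Markov's inequality then gives $\P(Z - \E Z \ge \epsilon) \le \exp\bigl(-\lambda\epsilon + \lambda^2 \sum_{i=1}^m c_i^2/8\bigr)$ for all $\lambda>0$, and the choice $\lambda = 4\epsilon/\sum_{i=1}^m c_i^2$ produces the asserted bound $\exp\bigl(-2\epsilon^2/\sum_{i=1}^m c_i^2\bigr)$. The lower-tail bound follows by applying the same argument to $-Z$, which satisfies the bounded-difference condition with the identical constants $c_i$.

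The main obstacle is the key step: verifying that the martingale increments are bounded in the conditional sense, which requires care about which variables are integrated out at each stage and uses the independence of $Y_k$ from the remaining coordinates to reduce the oscillation of $\Delta_k$ to that of $f$ in its $k$-th argument alone. Once this is in place, the conditional Hoeffding lemma and the Chernoff optimization are routine, so I would state the former and cite it rather than reprove it.
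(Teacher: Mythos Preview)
The paper does not actually prove this proposition; it merely recalls it as McDiarmid's bounded difference inequality, with citations to \cite{Mcd89a} and \cite{Bou03a}, and then uses it as a black box in the proof of Theorem~\ref{t3}. Your argument is the standard Doob-martingale plus Azuma--Hoeffding proof, which is precisely the argument in the cited references, so your proposal is correct and aligned with what the paper invokes.
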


\begin{proof}[Proof of Theorem~\ref{t3}]
It suffices to prove the second claim. Hence assume that $a$ and $b$, $-\infty \leq a < b \leq \infty$ are such that $\P \left( a < \lambda_1(S) \text{ and } \lambda_n(S) < b \right) = 1$ and that $f : (a, b) \rightarrow \R$ is of bounded variation on $\left( a, b \right)$. We shall now show that
\begin{equation}\label{pt2.1}
|F_S(f) - F_{S_{(i)}}(f)| \qquad \leq \qquad r V_f(a, b)/n
\qquad\qquad (i=1,\dots, m).
\end{equation}
With this, we can use the bounded difference inequality, i.e.,  Proposition~\ref{bdd}, with $Z$, $Z_{(i)}$, and $c_i$  ($1\leq i \leq m$)
replaced by $F_S(f)$, $F_{S_{(i)}}(f)$, and $r V_f(a,b)/n$, respectively, to
obtain \eqref{t3.2}, completing the proof.

To obtain \eqref{pt2.1}, set 
$G(\lambda) = F_S(\lambda) - F_{S_{(i)}}(\lambda)$ and choose $\alpha$ and $\beta$ satisfying $a < \alpha < \min\{ \lambda_1(S), \lambda_1(S_{(i)})\}$ and
$b > \beta > \max\{ \lambda_n(S), \lambda_n(S_{(i)})\}$.
With these choices, we can write $F_S(f) - F_{S_{(i)}}(f)$
as the Riemann-Stieltjes integral $\int_\alpha^\beta f dG$. In particular,
we have
\begin{equation}\nonumber
\Big| F_S(f) - F_{S_{(i)}}(f) \Big| \quad =\quad \Big| \int_\alpha^\beta f d G\Big|
\quad =\quad \Big| \int_\alpha^\beta G d f\Big|
\quad \leq\quad || G||_\infty V_f(a, b),
\end{equation}
where the second equality is obtained through integration by parts
upon noting that $G(\alpha) = G(\beta) =  0$.
By assumption, $||G||_\infty = || F_S - F_{S_{(i)}}||_\infty \leq r/n$,
and \eqref{pt2.1} follows.
\end{proof}

\begin{proof}[Proof of Theorem~\ref{t1}]
Our reasoning is similar to that used in the proof of Corollary~1.8 
of Guionnet and Zeitouni~\citep{Gui00a}. 
Set $\tilde{n} = m + n$ and write $\tilde{M}$ as shorthand for 
$\tilde{n} \times \tilde{n}$ matrix
\[ 
\tilde{M} \qquad=\qquad
\left( \begin{array}{cc}
0_{n \times n} & X'_{n \times m} \\
X_{m \times n} & 0_{m \times m} \end{array} \right).\]
Moreover,
set $\tilde{S} = \tilde{M}/\sqrt{m}$, and write $Y_i$ for the $i$-th row 
of $X$, $1 \leq i \leq m$, i.e., $Y_i = (X_{i,\cdot})'$. 
We view $\tilde{M}$ as a function of 
$Y_1, \dots, Y_m$. Also let $\tilde{f}(x) = f\left( x^2 \right)$. 

It is easy to check that
\begin{equation*}
F_{\tilde{S}}( \tilde{f} ) = \frac{2n}{\tilde{n}} F_S(f) + 
\frac{m - n}{\tilde{n}} f(0),
\end{equation*}
and hence
\begin{equation*}
\P \left( |F_S(f) - \mu| > \epsilon \right) \quad = \quad 
\P \left( |F_{\tilde{S}}(\tilde{f}) - \tilde{\mu}| > 
\frac{2n}{\tilde{n}}\epsilon \right),
\end{equation*}
where $\mu$ ($\tilde{\mu}$) can be either $\E F_S(f)$ 
($\E F_{\tilde{S}}(\tilde{f})$) or $\text{med } F_S(f)$ 
($\text{med }F_{\tilde{S}}(\tilde{f})$).

To prove~$\ref{t1.i}$, it suffices to note that Theorem~\ref{t2} applies with 
$\tilde{M}$, $\tilde{S}$, $\tilde{n}$, $n$,
$\tilde{f}$, and $1$  replacing $M$, $S$, $n$, $p$,
$f$, and $C_M$, respectively. 
Using Theorem~\ref{t2} with these replacements and with 
$\frac{2n}{\tilde{n}}\epsilon$ replacing $\epsilon$, we see that the left 
hand side of~\eqref{t1.i.1} is bounded as claimed.

To prove~\ref{t1.ii}, we first note that 
$||F_{\tilde{S}} - F_{\tilde{S}^{(i)}}||_{\infty} \leq 2/\tilde{n}$ in view of 
Lemma~\ref{bai} (where $\tilde{S}^{(i)}$ is defined as $\tilde{S}$ but with 
$Y_i$ replaced by an independent copy). Also, note that $\tilde{f}$ is of 
bounded variation on $\R$ with $V_{\tilde{f}}(\R) \leq V_f(\R)$. Hence, 
Theorem~\ref{t3} applies with $\tilde{M}, \tilde{S}, \tilde{n}, r$ and 
$\tilde{f}$ replacing $M, S, n, 2$ and $f$ respectively and~\eqref{t1.ii.1} 
follows after elementary simplifications.
\end{proof}

\end{appendix}

\subsection*{Acknowledgment}
We sincerely thank Professor Ofer Zeitouni for valuable comments that greatly 
helped us to improve an earlier version of the paper.

\bibliographystyle{acmtrans-ims}

\begin{thebibliography}{}
\ifx \url   \undefined \def \url#1{#1}   \fi

\bibitem{Bai08a}
\textsc{Bai, Z.} \textsc{and} \textsc{Zhou, W.} (2008).
\newblock Large sample covariance matrices without independence sturctures in
  columns.
\newblock \emph{Statist. Sinica\/}~\emph{{\bf 18}}, 425--442.

\bibitem{Bai99a}
\textsc{Bai, Z.~D.} (1999).
\newblock Methodologies in spectral analysis of large-dimensional random
  matrices, a review.
\newblock \emph{Statist. Sinica\/}~\textbf{9}, 611--677.
\newblock With comments by G. J.\ Rodgers and Jack W.\ Silverstein; and a
  rejoinder by the author.

\bibitem{Bou03a}
\textsc{Boucheron, S.}, \textsc{Lugosi, G.}, \textsc{and} \textsc{Massart, P.}
  (2003).
\newblock Concentration inequalities using the entropy method.
\newblock \emph{Ann. Probab.\/}~\textbf{31}, 1583--1614.

\bibitem{Bou98a}
\textsc{Boutet~de Monvel, A.} \textsc{and} \textsc{Khorunzhy, A.} (1998).
\newblock Limit theorems for random matrices.
\newblock \emph{Markov Process. Related Fields\/}~\textbf{4}, 175--197.

\bibitem{Che52a}
\textsc{Chernoff, H.} (1952).
\newblock A measure of asymptotic efficiency for tests of a hypothesis based on
  the sum of observations.
\newblock \emph{Ann. Math. Stat.\/}~\emph{{\bf 23}}, 493--507.

\bibitem{Got04a}
\textsc{G{\"o}tze, F.} \textsc{and} \textsc{Tikhomirov, A.} (2004).
\newblock Limit theorems for spectra of positive random matrices under
  dependence.
\newblock \emph{Zap. Nauchn. Sem. S.-Peterburg. Otdel. Mat. Inst. Steklov.
  (POMI)\/}~\textbf{311},~Veroyatn. i Stat. 7, 92--123, 299.

\bibitem{Gui04a}
\textsc{Guionnet, A.} (2004).
\newblock Large deviations and stochastic calculus for large random matrices.
\newblock \emph{Probab. Surv.\/}~\emph{{\bf 1}}, 72--172 (electronic).

\bibitem{Gui00a}
\textsc{Guionnet, A.} \textsc{and} \textsc{Zeitouni, O.} (2000).
\newblock Concentration of the spectral measure for large matrices.
\newblock \emph{Electron. Comm. Probab.\/}~\emph{{\bf 5}}, 119--136 (electronic).

\bibitem{Hou08a}
\textsc{Houdre, C.} \textsc{and} \textsc{Xu, H.} (2008).
\newblock Concentration of the spectral measure for large random matrices with
  stable entries.
\newblock \emph{Electron. J. Probab.\/}~\emph{{\bf 5}}, 107--134.

\bibitem{Lan93a}
\textsc{Lang, S.} (1993).
\newblock \emph{Real and Functional Analysis}, Third ed. Graduate Texts in
  Mathematics, Vol. \emph{{\bf 142}}.
\newblock Springer-Verlag, New York.

\bibitem{liebtrace}
\textsc{Lieb, E.} \textsc{and} \textsc{Pedersen, G.} (2002).
\newblock Convex multivariable trace functions.
\newblock \emph{Rev. Math. Phys.\/}~\emph{{\bf 14}}, 631--648.

\bibitem{Mcd89a}
\textsc{McDiarmid, C.} (1989).
\newblock On the method of bounded differences.
\newblock In \emph{Surveys in Combinatorics, 1989 (Norwich, 1989)}. London
  Math. Soc. Lecture Note Ser., Vol. \emph{{\bf 141}}. Cambridge Univ. Press,
  Cambridge, 148--188.

\bibitem{Men06a}
\textsc{Mendelson, S.} \textsc{and} \textsc{Pajor, A.} (2006).
\newblock On singular values of matrices with independent rows.
\newblock \emph{Bernoulli\/}~\emph{{\bf 12}}, 761--773.

\bibitem{Tal96b}
\textsc{Talagrand, M.} (1996).
\newblock A new look at independence.
\newblock \emph{Ann. Probab.\/}~\emph{{\bf 24}}, 1--34.

\bibitem{Wal23a}
\textsc{Walsh, J.~L.} (1923).
\newblock A closed set of normal orthogonal functions.
\newblock \emph{Amer. J. Math.\/}~\emph{{\bf 45}}, 5--24.

\end{thebibliography}

\end{document}